\title{Finite subgroups of automorphisms of free products}
\author{Ioannis Papavasileiou \and Dionysios Syrigos}
\newtheorem{lemma}{Lemma}[section]
\newtheorem{proposition}{Proposition}[section]
\newtheorem{corollary}{Corollary}[section]
\theoremstyle{definition}
\newtheorem{remark}{Remark}[section] 
\newtheorem{definition}{Definition}[section]
\newtheorem*{sublemma*}{Lemma}
\newtheorem{theorem}{Theorem}[section]
\newenvironment{mythm}[1]
  {\innercustomthm}
  {\endinnercustomthm}
\def\keywords{\xdef\@thefnmark{}\@footnotetext}
\begin{document}
\newpage
\maketitle
 
\begin{abstract}
We study finite subgroups of outer automorphisms of free products. We give upper bounds for the orders of these finite subgroups as well as bounds for the orders of individual torsion outer automorphisms under some (necessary) conditions for the free factors.
\end{abstract}
\setstretch{1.4}

\keywords{$2020$ \emph{Mathematics Subject Classification}: 20E36, 20E06} 
\keywords{\emph{Keywords}: Automorphisms of free products, Finite subgroups,  Outer space of a free product}

\section{Introduction}
One of the major problems in group theory is to determine the structure of the group of automorphisms $\mathrm{Aut}(G)$ of a group $G$, especially its finite
subgroups  as well as the behavior of individual
automorphisms. For example, using Nielsen's Realization Theorem one can study finite subgroups of $\mathrm{Aut}(F_p)$ and $\mathrm{Out}(F_p)$. The
maximal order of a finite subgroup of  $\mathrm{Out}(F_p)$ is $2^p\cdot p!$ \cite{WZ} and is realized as
the group of automorphisms of the standard rose $R_p$. It can also been proved \cite{BAO2000437},\cite{LEVITT1998630} that the
maximal order of torsion elements in $\mathrm{Aut}(F_p)$  grows asymptotically like $\mathrm{exp}(\sqrt{p\log{p}})$.\par
In this note we study finite subgroups of the group of outer automorphisms $\mathrm{Out}(G)$ of a group $G$ with a free product decomposition $G=G_1\ast\ldots\ast G_k\ast F_p$. We denote by $\mathrm{Out}(G,\mathcal{G})$ the subgroup of $\mathrm{Out}(G)$ which consists of those outer automorphisms that permute the counjugacy classes of the free factors $G_1,\ldots,G_k$. We firstly give a (uniform) upper bound for the orders of its finite subgroups. 

\begin{mythm}{\ref{main}}
 Let $G$ be a finitely generated group and $G=G_1\ast\ldots\ast G_k\ast F_p$ a free product decomposition of $G$ such that for the $G_i$'s and $\mathrm{Aut}(G_i)$'s there is a (uniform) bound for the orders of their finite subgroups. Then, there is a (uniform) bound for the orders of finite subgroups of $\mathrm{Out}(G,\mathcal{G})$, too.
  \end{mythm}
The converse of the above theorem is also true (see Proposition \ref{invmain}), thus the assumptions made on the $G_i$'s and $\mathrm{Aut}(G_i)$'s are minimal.\par
Furthermore, we prove that finite subgroups of $\mathrm{Out}(G)$ have uniformly bounded orders in case where $G$ is either a finite extension of a finitely generated free group (see Corollary \ref{fbf}) or a finite extension of a free product as in Theorem \ref{main} (see Corollary \ref{vfp}).\par
Finally, we give an upper bound for the orders of individual outer automorphisms.

\begin{mythm}{\ref{second}}
 Let $G$ be a finitely generated group and $G=G_1\ast\ldots\ast G_k\ast F_p$ a free product decomposition of $G$. Let $M_i$ be the maximal order among the elements of finite order in $\mathrm{Out}(G_i)$ and 
 let $N_i$ be the corresponding maximal order among the elements of finite order in $G_i$. Then every torsion element of $\mathrm{Out}(G,\mathcal{G})$ has order at most $M=M(M_i,N_i,k,p)$, where 
 \[
 M=\bigg(\left[e^{\frac{k}{e}}\right]+1\bigg)\cdot  \prod_iM_i\cdot\mathrm{exp}\big((1+\theta_p)\sqrt{p\log{p}}\big)\cdot \big(\max\limits_i\left\{N_i\right\}\big)^{2k-2+2p}
 \]
  and $\lim\limits_{p\to+\infty}\theta_p=0$.
  \end{mythm}

\section{Preliminaries}\label{prelim}
\subsection{Free factor systems and automorphisms}
Let $G$ be a finitely generated group and $G=G_1\ast\ldots\ast G_k\ast F_p$ a free product decomposition of $G$. If this is the Grushko decompositions of $G$, then every automorphism of $G$ permutes the conjugacy classes of the $G_i$'s, i.e. if $\phi\in\mathrm{Aut}(G)$, then $\phi(G_i)=gG_jg^{-1}$ for some $g\in G$. Apart from the Grushko decomposition, there are other decompositions which could be useful, too. We let $\mathcal{G}=\{[G_1],\ldots,[G_k]\}$ be the set of $G$-congugacy classes of the subgroups $G_i$; we call it a \textbf{free factor system} of $G$. There is a natural action of the group of outer automorphisms $\mathrm{Out}(G)$ of $G$ on the set of free factor systems of $G$. If $[\phi]\in \mathrm{Out}(G)$ is an outer automorphism and $\mathcal{H}=\{[H_1],\ldots,[H_n]\}$ is a free factor system, then $[\phi](\mathcal{H})=\{[\phi(H_1)],\ldots,[\phi(H_n)]\}$ is a free factor system of $G$, too. In general, it is not true that $[\phi](\mathcal{H})=\mathcal{H}$. The \textbf{relative (to $\mathcal{G}$) outer automorphism group} $\mathrm{Out}(G,\mathcal{G})$ is the subgroup of $\mathrm{Out}(G)$ consisting of those outer automorphisms $[\phi]$ which preserve $\mathcal{G}$, i.e. $[\phi](\mathcal{G})=\mathcal{G}$.\par
Let $\mathrm{Out}'(G,\mathcal{G})$ be the finite index subgroup of $\mathrm{Out}(G,\mathcal{G})$ consisting of outer automorphisms fixing each congugacy class $[G_i]$. 
Since each $G_i$ equals to its
 normalizer in $G$, $\mathrm{Out}'(G,\mathcal{G})$ maps onto $\mathrm{Out}(G_i)$, for every $i$. There is also a homomorphism $\mathrm{Out}'(G,\mathcal{G})\to \mathrm{Out}(F_p)$, hence we have a natural map:
 \[
\pi:\mathrm{Out}'(G,\mathcal{G})\to \mathrm{Out}(F_p)\times \prod_{i}\mathrm{Out}(G_i).
\]
We will denote by $\mathrm{Out}(G,\mathcal{G}^{(t)})$ the kernel of the epimorphism $\mathrm{Out}'(G,\mathcal{G})\to\prod\limits_{i}\mathrm{Out}(G_i)$ and by $\mathrm{Out}_0(G,\mathcal{G}^{(t)})$ the kernel of the epimorphism $\pi$.

\subsection{The relative outer space}

We shall describe a space, which was introduced by Guirardel and Levitt in \cite{Outspaceprod}, that depends only on the chosen free factor system $\mathcal{G}$ of $G$.\par
We need the following definitions. Let $G$ be a group acting on a metric simplicial tree $T$, i.e. $T$ is a simplicial tree and each orbit of edges  is assigned with a positive length.
\begin{definition}
    We say that a metric simplicial tree $T$ is a \textbf{$G$-tree} if there exists a $G$-action on $T$ which is:
    \begin{enumerate}
        \item isometric, i.e. $d_T(x,y)=d_T(gx,gy)$ for every $x,y\in T$ and $g\in G$,
        \item simplicial, i.e. vertices are sent to vertices and edges to edges,
        \item minimal, i.e. there is no proper, non trivial $G$-invariant subtree of $T$ and
        \item co-compact, i.e. the quotient space $T/G$ is a finite graph.
     \end{enumerate}
\end{definition}

\begin{definition}
    Let $\mathcal{G}=\{[G_1],\ldots,[G_k]\}$ be a free factor system of $G$. A $G$-tree $T$ will be said to be a \textbf{$\mathcal{G}$-tree} if:
    \begin{enumerate}
        \item all edge stabilisers are trivial and
        \item the free factor system induced by the vertex stabilisers of $T$ is exactly $\mathcal{G}$, i.e. for each $i$ there is exactly one orbit of vertices with stabilizer conjugate to $G_i$ and all other points have trivial stabilizer.
    \end{enumerate}
\end{definition}
We are now in position to give the definition of the outer space relative to $\mathcal{G}$.
\begin{definition}
    The \textbf{(relative) outer space} $\mathcal{O}=\mathcal{O}(\mathcal{G})$ is defined to be the space of equivalence classes of $\mathcal{G}$-trees where two trees are considered as equivalent if there exists a $G$-equivariant homothety between them.
\end{definition}
In order to refer to a point of $\mathcal{O}(\mathcal{G})$ we shall write $T\in \mathcal{O}(\mathcal{G})$ instead of its class $[T]\in \mathcal{O}(\mathcal{G})$.\par
The group $\mathrm{Aut}(G,\mathcal{G})$, consisting of automorphisms which permute the conjugacy classes of the $G_i$'s,  acts on the set of $\mathcal{G}$-trees by changing the action, i.e. for $\phi\in \mathrm{Aut}(G,\mathcal{G})$ and a $\mathcal{G}$-tree $T$, $\phi(T)$ is defined to be the $\mathcal{G}$-tree with the same
underlying tree as $T$ and $G$-action given by $(g,x)\in G\times T\mapsto \phi(g)x\in T$. It is clear that inner automorphisms of $G$ act by isometries on the set of $\mathcal{G}$-trees hence there is an induced action of $\mathrm{Out}(G,\mathcal{G})$ on $\mathcal{O}(\mathcal{G})$. Roughly speaking, every element of $\mathcal{O}(\mathcal{G})$ can be viewed as a graph of groups and the action of $\mathrm{Out}(G,\mathcal{G})$ is by changing the marking.\par
Every $T\in \mathcal{O}$ determines an (open) simplex $\Delta(T)\subset\mathcal{O}$ which is the set of points of $\mathcal{O}$ obtained from $T$ by changing the lengths of (orbits of) edges in such
a way so that every edge has positive length and the sum of lengths of edges in the quotient graph $T/G$ is equal to $1$. In this way we can embed every open simplex $\Delta(T)$ of $\mathcal{O}$ in a Euclidean space $\mathbb{R}^n$, where $n$ depends on the number of orbits of edges of $T$. Thus, $\mathcal{O}$ can be thought of as a union of open simplices as described above. It is easy to see that  there are only finitely many orbits of simplices under the action of $\mathrm{Out}(G,\mathcal{G})$ on $\mathcal{O}(\mathcal{G})$.\par
We recall the description of the stabilizer of $\Delta(T)$, i.e. the subgroup $\mathrm{Out}^{\Delta(T)}(G,\mathcal{G})\leq \mathrm{Out}(G,\mathcal{G})$ sending $\Delta(T)$ to itself, as given in \cite{Outspaceprod}. Let $\mathrm{Out}_0^{\Delta(T)}(G,\mathcal{G})\leq \mathrm{Out}^{\Delta(T)}(G,\mathcal{G})$ be the finite index subgroup consisting of outer automorphisms acting trivially on the quotient graph $T/G$. We denote by $n_i$ the degree of every non-free vertex $v_i$ of $T/G$.
\begin{remark}\label{valences}
    It is well known that for every point $T\in \mathcal{O}$, the sum $\sum\limits_{v_i\in V(T/G)}n_i$ is at most $2(3k-3+2p)$ \cite{SykStableRepresentatives,Lyman}. In fact, it is not difficult to see that the above bound can be improved to $2k-2+2p$, since the minimum number of orbits of edges of a point in $\mathcal{O}$ is $k-1+p$.
\end{remark}
The group $\mathrm{Out}_0^{\Delta(T)}(G,\mathcal{G})$ is a direct product $\prod\limits_{i=1}^kM_{n_i}(G_i)$ of groups which fit in exact sequences
 \begin{align}
     \{1\}\to G_i^{n_i}/Z(G_i)\to M_{n_i}(G_i)\to \mathrm{Out}(G_i)\to  \{1\}\label{(1)}\\
     \{1\}\to G_i^{n_i-1}\to M_{n_i}(G_i)\to \mathrm{Aut}(G_i)\to \{1\}\label{(2)}
 \end{align}
 where the center $Z(G_i)$ is embedded diagonally into $G_i^{n_i}$. The latter exact sequence is split hence $M_{n_i}(G_i)=G_i^{n_i-1}\rtimes \mathrm{Aut}(G_i)$ where $\mathrm{Aut}(G_i)$ acts diagonally on $G_i^{n_i-1}$. It can be proved \cite[Proposition~4.2]{Levitthyper} that 
 \begin{align}\label{(3)}
  \mathrm{Out}_0^{\Delta(T)}(G,\mathcal{G}) =\prod_{i=1}^kM_{n_i}(G_i)=\prod_{i=1}^k\big(G_i^{n_i-1}\rtimes \mathrm{Aut}(G_i)\big).
\end{align}
In particular, the kernel of the product epimorphism 
\begin{align}\label{(4)}
p:\mathrm{Out}_0^{\Delta(T)}(G,\mathcal{G}) =\prod_{i=1}^kM_{n_i}(G_i) \to \prod_{i=1}^k \mathrm{Aut}(G_i)
\end{align}
is $\mathrm{ker}p=\prod\limits_{i=1}^kG_i^{n_i-1}$. \par
We also have the following embedding (see the proof of Lemma 5.1 in \cite{Outspaceprod})
 \begin{align}\label{(5)}
\mathrm{Out}_0^{\Delta(T)}(G,\mathcal{G}^{(t)})\coloneqq \mathrm{Out}_0(G,\mathcal{G}^{(t)})\cap \mathrm{Out}_0^{\Delta(T)}(G,\mathcal{G})\hookrightarrow \prod_{i=1}^k(G_i^{n_i}/Z(G_i)).
\end{align}

The following result, which is the Nielsen Realisation for free
products, is a key ingredient for the proof of our main theorem.
 \begin{theorem}[{{\cite[Corollary 6.1.]{Hensel_2018}}}]\label{finitesub}
     Let $G$ be a finitely generated group and $G=G_1\ast\ldots\ast G_j\ast F_p$ a free product decomposition of $G$. Every finite subgroup $H\leq \mathrm{Out}(G,\mathcal{G})$ fixes a point of $\mathcal{O}(\mathcal{G})$.
 \end{theorem}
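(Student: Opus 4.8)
The plan is to produce a single point of $\mathcal{O}(\mathcal{G})$ fixed by the whole finite group $H$. Since $H\leq \mathrm{Out}(G,\mathcal{G})$, it acts on $\mathcal{O}(\mathcal{G})$, and by the construction of Guirardel--Levitt this space is contractible with only finitely many orbits of (open) simplices. It is tempting to try to extract a fixed point from contractibility alone, but this cannot work for a \emph{general} finite group: a finite group acting simplicially on a contractible finite-dimensional complex need not fix any point (the relevant obstructions are already visible for actions on disks). What rescues the situation is the \emph{geometry} of $\mathcal{O}(\mathcal{G})$, so I would look for a fixed point by a circumcenter (center-of-mass) construction rather than a purely homotopical one.

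Concretely, the first step is to equip $\mathcal{O}(\mathcal{G})$, or a convenient $H$-invariant deformation retract such as its spine, with an $H$-invariant complete metric $d$ that is nonpositively curved in a sufficiently strong sense: CAT(0), or injective, or a combinatorial analogue (Helly or systolic) if one prefers to stay simplicial. The point of such a structure is that every bounded subset has a \emph{unique} circumcenter, i.e. the center of its smallest enclosing ball. Granting this, the argument is short: pick any $T_0\in\mathcal{O}(\mathcal{G})$, form its orbit $H\cdot T_0$, which is finite and hence bounded, and let $c$ be its circumcenter. Because $H$ acts by isometries and permutes $H\cdot T_0$, it preserves the smallest enclosing ball and therefore fixes its unique center $c$; this $c$ is the desired fixed point.

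The main obstacle is precisely the construction of such an invariant metric, since the naive simplicial and (symmetrised) Lipschitz metrics on outer space are \emph{not} CAT(0), so one cannot simply quote a standard fixed-point theorem. The natural remedy is to pass to the spine of $\mathcal{O}(\mathcal{G})$ and to establish a combinatorial nonpositive-curvature property on it, on which finite groups are known to fix a point; this is where the real content lies, and I expect it to require a careful analysis of the blow-up and Whitehead-type moves relating neighbouring simplices, together with the description of simplex stabilizers recorded above. A secondary subtlety is that $\mathcal{O}(\mathcal{G})$ is not complete, so a priori a circumcenter could escape to the boundary (a degenerate tree with some edge lengths tending to $0$, or with collapsed free-factor structure); one must exploit the finiteness of the number of orbits of simplices to control the geometry and guarantee that the circumcenter stays inside $\mathcal{O}(\mathcal{G})$.
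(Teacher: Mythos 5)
Your proposal cannot be compared against ``the paper's own proof'' because the paper has none: Theorem \ref{finitesub} is imported wholesale from the literature (it is Corollary 6.1 of \cite{Hensel_2018}, Hensel--Kielak's \emph{Nielsen realisation by gluing}), and the present paper only uses it as a black box. So the relevant comparison is with that source, and against it your sketch has a genuine gap, located exactly where you say ``this is where the real content lies.'' Every step of your argument is conditional on equipping $\mathcal{O}(\mathcal{G})$, or its spine, with an $H$-invariant structure of nonpositive-curvature type (CAT(0), injective/Helly, systolic, or dismantlable) in which bounded orbits have unique, equivariantly-defined circumcenters. No such structure is constructed in your sketch, and none is available to quote: as you yourself note, the natural metrics on outer space fail to be CAT(0), and establishing a combinatorial substitute on the spine is precisely the hard open-ended part. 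For the closely analogous Culler--Vogtmann outer space this difficulty is well known, which is why the classical Nielsen realization proofs for $\mathrm{Out}(F_n)$ (Culler, Khramtsov, Zimmermann) do not proceed via metric fixed-point theorems; the dismantlability route you allude to has been carried out for arc complexes of surfaces (Hensel--Osajda--Przytycki), but transporting it to the Guirardel--Levitt space of a free product is a research-level task, not a step one may defer. The parts you do execute (finiteness of the orbit, invariance of the enclosing ball, the caveat about completeness) are standard and carry no weight without that missing lemma.

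It is also worth recording that the actual proof in the cited source takes an entirely different route, which makes the missing-lemma strategy unnecessary. Hensel--Kielak prove a generalization of the Karrass--Pietrowski--Solitar theorem: given a finite subgroup $H\leq \mathrm{Out}(G,\mathcal{G})$, one forms (using $Z(G)=1$ in the nondegenerate cases) the extension $1\to G\to \Gamma\to H\to 1$, and then constructs, by an inductive gluing argument in Bass--Serre theory, a $\Gamma$-action on a tree with finite edge stabilizers whose vertex stabilizers are finite extensions of conjugates of the $G_i$'s. Restricting this tree to $G$ yields a $\mathcal{G}$-tree, i.e.\ a point of $\mathcal{O}(\mathcal{G})$, which is fixed by $H$ by construction. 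In other words, the fixed point is built directly out of a group action on a tree rather than located inside $\mathcal{O}(\mathcal{G})$ by a convexity argument; if you want a completable proof, that is the strategy to pursue.
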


\subsection{Finite subgroups}
Let $G$ be a group and $g$ a torsion element of $G$. We denote by $o(g)$ the order of $g$ in $G$ and by $T(G)$ the set of all torsion elements of $G$.
\begin{lemma}\label{lcm}
    Let $G$ be a direct product $G=\prod_{1\leq i\leq n} G_i$ where the $G_i$'s have an upper bound $M_i$ for the orders of their torsion elements. Then, torsion elements of $G$ have order at most
    \[
    \max\limits_{(g_i)_{1\leq i\leq n}} \mathrm{lcm}\left\{o(g_1),\ldots,o(g_n)\right\}\leq \prod_iM_i
    \]
\end{lemma}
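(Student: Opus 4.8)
The plan is to reduce everything to the elementary fact that, in a direct product, the order of an element is the least common multiple of the orders of its coordinates. First I would take an arbitrary torsion element $g=(g_1,\ldots,g_n)\in G$ and observe that $g^m=(g_1^m,\ldots,g_n^m)$, so that $g^m$ is trivial precisely when $g_i^m=1$ for every $i$, that is, precisely when $o(g_i)\mid m$ for all $i$. The smallest positive integer with this divisibility property is $\mathrm{lcm}\{o(g_1),\ldots,o(g_n)\}$, which yields $o(g)=\mathrm{lcm}\{o(g_1),\ldots,o(g_n)\}$. In particular $g$ is torsion if and only if each coordinate $g_i$ is torsion, so the torsion elements of $G$ are exactly the tuples of torsion elements, and the largest order attained among them is indeed $\max_{(g_i)}\mathrm{lcm}\{o(g_1),\ldots,o(g_n)\}$, the maximum ranging over tuples of torsion elements.

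For the second inequality I would use that the least common multiple of finitely many positive integers divides their product, and hence cannot exceed it. Since each $g_i$ has order at most $M_i$ by hypothesis, this gives
\[
\mathrm{lcm}\{o(g_1),\ldots,o(g_n)\}\le \prod_i o(g_i)\le\prod_i M_i,
\]
and taking the maximum over all tuples of torsion elements produces the stated bound $\prod_i M_i$.

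There is no genuine obstacle here: the statement is essentially a formality once the order-equals-lcm identity is recorded, and that identity is itself standard. The only points deserving a moment's care are, first, that the hypothesis gives finite upper bounds $M_i$ on the torsion orders of each factor, which guarantees both that $\prod_i M_i$ is a finite bound and that the possible values of $o(g_i)$ lie in the finite set $\{1,\ldots,M_i\}$; consequently the values of $\mathrm{lcm}\{o(g_1),\ldots,o(g_n)\}$ form a finite set of positive integers, so the maximum on the left is genuinely attained and not merely a supremum.
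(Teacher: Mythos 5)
Your proof is correct and follows essentially the same route as the paper, whose entire proof is the single observation that $o(g)=\mathrm{lcm}\{o(g_1),\ldots,o(g_n)\}$ for a torsion element $g=(g_1,\ldots,g_n)$; you simply spell out this identity and the elementary bound $\mathrm{lcm}\leq\prod_i o(g_i)\leq\prod_i M_i$ that the paper leaves implicit.
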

\begin{proof}
    Note that if $g=(g_1,\ldots,g_n)$ is a torsion element of $G$, then $o(g)=\mathrm{lcm}\left\{o(g_1),\ldots,o(g_n)\right\}$.
\end{proof}

\begin{remark}
There are cases where the above inequality is achieved as equality. For example, take $\mathbb{Z}_2\times\mathbb{Z}_3$. On the other hand, if two of the $G_i$'s are isomorphic, then the inequality is proper. In fact in a direct product of the form $\prod_{1\leq i\leq n} G_i=A^n$ the torsion elements have order at most
    \[
    M(M-1)\cdot\ldots\cdot (M-n+1)=(M)_n
    \]
    where $M$ is an upper bound for the order of the torsion elements of $A$.
\end{remark}

\begin{lemma}\label{direct}
    Let $G$ be a direct product $G=\prod_{1\leq i\leq n} G_i$ where the $G_i$'s have an upper bound $M_i$ for the orders of their finite subgroups. Then, every finite subgroup of $G$ has order at most $\prod\limits_{i=1}^nM_i$.
\end{lemma}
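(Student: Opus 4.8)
The plan is to exploit the coordinate projections of the direct product. First I would fix a finite subgroup $H\leq G$ and consider, for each $i$, the canonical projection homomorphism $p_i\colon G\to G_i$. Since $H$ is finite, its image $p_i(H)$ is a finite subgroup of $G_i$ (it is a homomorphic image of the finite group $H$), and so by hypothesis $|p_i(H)|\leq M_i$.

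Next I would observe that $H$ embeds into the direct product of its own projections. Indeed, the map $H\to\prod_{i=1}^n p_i(H)$ sending $h\mapsto(p_1(h),\ldots,p_n(h))$ is nothing but the restriction to $H$ of the identity on $G=\prod_{i=1}^n G_i$; it is therefore injective, and its image lies in $\prod_{i=1}^n p_i(H)$ by the very definition of the sets $p_i(H)$. Hence $H$ is isomorphic to a subgroup of $\prod_{i=1}^n p_i(H)$, and a cardinality count yields $|H|\leq\prod_{i=1}^n|p_i(H)|\leq\prod_{i=1}^n M_i$, as claimed.

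There is essentially no serious obstacle here: the entire argument rests on the single observation that a subgroup of a direct product is always contained in the product of its coordinate projections. The only point worth stressing is the legitimacy of the step $|p_i(H)|\leq M_i$, namely that $p_i(H)$ really is a finite subgroup of $G_i$, so that the bounding hypothesis on finite subgroups of $G_i$ genuinely applies. As the preceding remark makes clear, the inclusion $H\hookrightarrow\prod_i p_i(H)$ is typically proper, so the bound $\prod_i M_i$ is usually far from sharp; but for the uniform boundedness needed in the sequel this is all that is required.
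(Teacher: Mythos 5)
Your proof is correct and is essentially identical to the paper's: both fix a finite subgroup $H$, project via the canonical maps $p_i\colon G\to G_i$, note that the diagonal map $h\mapsto\big(p_1(h),\ldots,p_n(h)\big)$ embeds $H$ into $\prod_{i=1}^n p_i(H)$, and bound $|H|\leq\prod_{i=1}^n|p_i(H)|\leq\prod_{i=1}^n M_i$. Nothing to add.
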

\begin{proof}
     We denote by $p_i:G\to G_i$ the natural projections. Let $H$ be a finite subgroup of $G$. Consider the homomorphism
    \[
    H \to p_1(H)\times\cdots\times p_n(H), \text{ given by }  h\mapsto \Big(p_1(h),\ldots,p_n(h)\Big). 
    \]
    This map is injective, and each image $p_i(H)\cong H/\mathrm{ker}p_i|_H$ is a finite subgroup of $G_i$. Hence,
    \[
    |H|\leq \prod_{i=1}^n|p_i(H)|\leq \prod_{i=1}^nM_i.
    \]
\end{proof}

\begin{comment}
    
\begin{lemma}\label{fininte}
Let $G$ be a finitely generated group and $G=G_1\ast\ldots \ast G_k\ast F_p$ be a free product decomposition of $G$ where the $G_i$'s and $G_i/Z(G_i)$'s are torsion free. If $L$ is a torsion subgroup of $\mathrm{Out}'(G,\mathcal{G})$, then $\pi$ injects $L$ into $\mathrm{Out}(F_p)\times\prod_i\mathrm{Out}(G_i)$.
\end{lemma}
\begin{proof}
     The group $L_0=L\cap \mathrm{Out}_0(G,\mathcal{G}^{(t)})$ embeds into $\prod\limits_{i=1}^k(G_i^{n_i}/Z(G_i))$ which is torsion free (by \cite{Outspaceprod}), hence $L_0=\{1\}$.
\end{proof}
\end{comment}
 
 \section{The main theorem}
 We now give an upper bound on the maximal order of finite subgroups of $\mathrm{Out}(G,\mathcal{G})$.
 \begin{theorem}\label{main}
     Let $G$ be a finitely generated group and $G=G_1\ast\ldots\ast G_k\ast F_p$ a free product decomposition of $G$ such that for the $G_i$'s and $\mathrm{Aut}(G_i)$'s there is a (uniform) bound for the orders of their finite subgroups. Then, there is a (uniform) bound for the orders of finite subgroups of $\mathrm{Out}(G,\mathcal{G})$, too.
 \end{theorem}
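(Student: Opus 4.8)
The plan is to use the Nielsen Realisation theorem to force any finite subgroup into a simplex stabiliser, and then to exploit the explicit description of that stabiliser as a product of the groups $M_{n_i}(G_i)$. Concretely, I would start with an arbitrary finite subgroup $H\leq \mathrm{Out}(G,\mathcal{G})$ and apply Theorem \ref{finitesub} to obtain a fixed point of $\mathcal{O}(\mathcal{G})$; since this point lies in some open simplex $\Delta(T)$ and $H$ permutes the simplices, $H$ must stabilise $\Delta(T)$, i.e. $H\leq \mathrm{Out}^{\Delta(T)}(G,\mathcal{G})$.

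First I would replace $H$ by $H_0\coloneqq H\cap \mathrm{Out}_0^{\Delta(T)}(G,\mathcal{G})$. Because $\mathrm{Out}_0^{\Delta(T)}(G,\mathcal{G})$ is the (normal) kernel of the action on the finite quotient graph $T/G$, the index $[H:H_0]$ is at most the order of the automorphism group of $T/G$, which is bounded by a function of the number of edges of $T/G$; by Remark \ref{valences} and the finiteness of the number of orbits of simplices, this combinatorial complexity is bounded purely in terms of $k$ and $p$. Thus it suffices to bound $|H_0|$ uniformly.

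Next I would use the decomposition \eqref{(3)}, namely $\mathrm{Out}_0^{\Delta(T)}(G,\mathcal{G})=\prod_{i=1}^{k}M_{n_i}(G_i)$, together with Lemma \ref{direct}: a uniform bound on the finite subgroups of each factor $M_{n_i}(G_i)$ yields one for the product. To bound the finite subgroups of a single $M_{n_i}(G_i)$ I would invoke the split exact sequence \eqref{(2)}, $\{1\}\to G_i^{n_i-1}\to M_{n_i}(G_i)\to \mathrm{Aut}(G_i)\to\{1\}$. For a finite subgroup $K\leq M_{n_i}(G_i)$, its intersection with $G_i^{n_i-1}$ is a finite subgroup of a direct power of $G_i$, hence (again by Lemma \ref{direct}) has order at most $M_i^{\,n_i-1}$, where $M_i$ bounds the finite subgroups of $G_i$, while the quotient $K/(K\cap G_i^{n_i-1})$ embeds into $\mathrm{Aut}(G_i)$ and so has order at most the bound $B_i$ for its finite subgroups. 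Therefore $|K|\leq M_i^{\,n_i-1}B_i$.

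Finally I would assemble the estimate. By Remark \ref{valences} we have $n_i\leq \sum_j n_j\leq 2k-2+2p$, so every exponent $n_i-1$ appearing above is bounded independently of $T$, and multiplying the factor bounds over $i$ together with the index bound coming from the graph automorphisms produces a single constant depending only on $k$, $p$, the $M_i$ and the $B_i$. The main obstacle, and the point requiring genuine care, is the uniformity: the bound must be independent of the particular fixed tree $T$. This is exactly what Remark \ref{valences} guarantees, by bounding simultaneously the degrees $n_i$ (which control the powers $G_i^{n_i-1}$) and the size of $T/G$ (which controls the index $[H:H_0]$); without such a bound each of these ingredients could a priori grow with the chosen simplex.
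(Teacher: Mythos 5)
Your proposal is correct and follows essentially the same route as the paper's proof: Nielsen realisation (Theorem \ref{finitesub}) to place $H$ in a simplex stabiliser, the product decomposition of $\mathrm{Out}_0^{\Delta(T)}(G,\mathcal{G})$ into the groups $M_{n_i}(G_i)$, Lemma \ref{direct}, and Remark \ref{valences} to make the bound independent of $T$. The only differences are organizational and immaterial: you bound the index $[H:H_0]$ in a single step via the automorphisms of the graph $T/G$ (the paper splits this into the permutation action on the classes $[G_i]$ and then the action on $T/G$), and you bound each factor $M_{n_i}(G_i)$ separately through the split sequence (\ref{(2)}) rather than restricting the global projection (\ref{(4)}) to $H_0$ and bounding its kernel and image.
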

 \begin{proof}
Let $H$ be a finite subgroup of $\mathrm{Out}(G,\mathcal{G})$. By Theorem \ref{finitesub}, $H$ fixes a point $\Delta(T)$ of $\mathcal{O}(\mathcal{G})$, i.e. $H\leq \mathrm{Out}^{\Delta(T)}(G,\mathcal{G})$. We will give an upper bound for the cardinality of $H$ in the following four steps.
\begin{enumerate}
    \item In order to make use of the strong properties of $\mathrm{Out}_0^{\Delta(T)}(G,\mathcal{G})$ which is of finite index in $\mathrm{Out}^{\Delta(T)}(G,\mathcal{G})$, we first consider the intersection $H_1=H\cap \mathrm{Out}'(G,\mathcal{G})$. Then $H/H_1$ is isomorphic to a subgroup of $\prod_{i=1}^m \mathrm{Sym}_{k_i}$ (where $m$ is the number of isomorphic classes of the $G_i$'s and $k_i$ is the number of appearances of the $i$-th class) since 
    \[
    H/H_1=\dfrac{H}{H\cap \mathrm{Out}'(G,\mathcal{G})} \cong \dfrac{H\cdot\mathrm{Out}'(G,\mathcal{G})}{\mathrm{Out}'(G,\mathcal{G})}\leq \dfrac{\mathrm{Out}(G,\mathcal{G})}{\mathrm{Out}'(G,\mathcal{G})}\leq \prod_{i=1}^m \mathrm{Sym}_{k_i}.
    \]
    We denote by $A$ the order of a finite subgroup of maximal order in $\prod_{i=1}^m \mathrm{Sym}_{k_i}$. Note that $A\leq \prod\limits_{i=1}^mk_i!$.
    \item Let $H_0$ be the intersection $H_1\cap \mathrm{Out}_0^{\Delta(T)}(G,\mathcal{G})$. Then, $H_0$ acts trivially on the quotient graph $T/G$ and has finite index in $H_1$, since
    \[
    |H_1:H_0|=|H_1\cdot \mathrm{Out}_0^{\Delta(T)}(G,\mathcal{G}):\mathrm{Out}_0^{\Delta(T)}(G,\mathcal{G})|\leq |\mathrm{Out}^{\Delta(T)}(G,\mathcal{G}):\mathrm{Out}_0^{\Delta(T)}(G,\mathcal{G})|<+\infty.
    \]
    In fact, since $H_1$ fixes the vertices of $T/G$, the index $|H_1:H_0|$ depends only on $p=\mathrm{rank}(F_p)$.
    \item Since $H_0$ is a subgroup of $\mathrm{Out}_0^{\Delta(T)}(G,\mathcal{G})$ we can consider the restriction of the homomorphism $p$ to $H_0$, namely  $p|_{H_0}:H_0\to\prod\limits_{i=1}^k\mathrm{Aut}(G_i)$. By the first isomorphism theorem, $H_0/\mathrm{ker}p|_{H_0}\cong \mathrm{Im}p|_{H_0}\leq  \prod\limits_{i=1}^k\mathrm{Aut}(G_i)$. Since $H_0$ is finite (being a subgroup of $H$), the image $\mathrm{Im}p|_{H_0}$ is a finite subgroup of $\prod\limits_{i=1}^k\mathrm{Aut}(G_i)$, and therefore, by Lemma \ref{direct}, its cardinality is bounded by the product $\prod\limits_{i=1}^kM_i$ where each $M_i$ is an upper bound for the orders of finite subgroups of $\mathrm{Aut}(G_i)$.
    \item Finally we bound the order of the finite group $\mathrm{ker}p|_{H_0}$. By (\ref{(4)}) $\mathrm{ker}p|_{H_0}=\mathrm{ker}p\cap H_0 \leq \prod\limits_{i=1}^kG_i^{n_i-1}$, and thus, once again by Lemma \ref{direct}, the order $|\mathrm{ker}p|_{H_0}|$ is bounded by the product $\prod\limits_{i=1}^kN_i^{n_i-1}$, where each $N_i$ is an upper bound for the orders of finite subgroups of $G_i$. By Remark \ref{valences}, the product $\prod\limits_{i=1}^kN_i^{n_i-1}$ is bounded above by $\Big(\max\limits_i\{N_i\}\Big)^{k-2+2p}$.
\end{enumerate}
Since 
    \[
|H|=|H/H_1|\cdot |H_1/H_0|\cdot |H_0/\mathrm{ker}p|_{H_0}|\cdot |\mathrm{ker}|_{H_0}|
\]
the above steps complete the proof.
 \end{proof}
 \begin{remark}
     Note that the uniform bound $M$ for the orders of the finite subgroups of $\mathrm{Out}(G,\mathcal{G})$ given in the above proof depends only on the $G_i$'s, their automorphism groups, and $p=\mathrm{rank}(F_p)$.
 \end{remark}

We also note that the existence of a uniform bound on the orders of finite subgroups of $\mathrm{Out}(G,\mathcal{G})$ can also be proved by imposing certain hypothesis on the virtual cohomological dimension of the $G_i$'s and their automorphism groups.
\begin{remark}
 Let $G$ be a finitely generated group and $G=G_1\ast\ldots\ast G_k\ast F_p$ a free product decomposition of $G$. If 
    \begin{itemize}
        \item either, the $G_i$'s, $G_i/Z(G_i)$'s are  torsion free with finite virtual cohomological dimension and the $\mathrm{Out}(G_i)$'s are virtually
torsion-free  with finite virtual cohomological dimension,
\item or, the $G_i$'s, $G_i/Z(G_i)$'s are  torsion free and the $G_i$'s, $\mathrm{Aut}(G_i)$'s have finite virtual cohomological dimension,
    \end{itemize}
    then (see \autocite[Theorem~5.2]{Outspaceprod}) $\mathrm{Out}(G,\mathcal{G})$ has finite virtual cohomological dimension. In this case, there exists a finite index, torsion free, normal subgroup $H$ of $\mathrm{Out}(G,\mathcal{G})$. Thus, every finite subgroup $K$ of $\mathrm{Out}(G,\mathcal{G})$ embeds into $\mathrm{Out}(G,\mathcal{G})/H$ hence it has order at most $|\mathrm{Out}(G,\mathcal{G}):H|$. 
\end{remark}
 On the other hand,  Theorem \ref{main} makes the minimal (necessary and sufficient) assumptions required on the $G_i$'s and $\mathrm{Aut}(G_i)$'s as shown by the following proposition.
\begin{proposition}\label{invmain}
 Let $G$ be a finitely generated group and $G=G_1\ast\ldots\ast G_k\ast F_p$ a free product decomposition of $G$. There is a uniform bound for the orders of finite subgroups of $\mathrm{Out}(G,\mathcal{G})$ if and only if the same holds for the finite subgroups of the $G_i$'s and $\mathrm{Aut}(G_i)$'s.
\end{proposition}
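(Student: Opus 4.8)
One implication---that a bound on the finite subgroups of the $G_i$ and the $\mathrm{Aut}(G_i)$ yields one on $\mathrm{Out}(G,\mathcal{G})$---is exactly Theorem~\ref{main}. It therefore remains to prove the reverse implication: assuming that every finite subgroup of $\mathrm{Out}(G,\mathcal{G})$ has order at most some fixed $B$, I must bound the orders of finite subgroups of each $G_i$ and each $\mathrm{Aut}(G_i)$. The plan is to realise both $\mathrm{Aut}(G_i)$ and $G_i$ as genuine subgroups of $\mathrm{Out}(G,\mathcal{G})$; once this is done, each of their finite subgroups is a finite subgroup of $\mathrm{Out}(G,\mathcal{G})$ and hence has order at most $B$, which is precisely the desired uniform bound.

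The mechanism for these embeddings is the stabiliser decomposition (\ref{(3)}). For a fixed index $i$ I would choose a $\mathcal{G}$-tree $T$ and use that $\mathrm{Out}_0^{\Delta(T)}(G,\mathcal{G})=\prod_{j=1}^kM_{n_j}(G_j)$ is a subgroup of $\mathrm{Out}(G,\mathcal{G})$ in which each factor $M_{n_j}(G_j)=G_j^{n_j-1}\rtimes\mathrm{Aut}(G_j)$ sits as a direct factor. The splitting of (\ref{(2)}) then displays $\mathrm{Aut}(G_i)$ as a subgroup of $M_{n_i}(G_i)\leq\mathrm{Out}(G,\mathcal{G})$, which bounds the finite subgroups of $\mathrm{Aut}(G_i)$ by $B$ for any admissible $T$. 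To capture $G_i$ itself I would use the normal subgroup $G_i^{n_i-1}$ of $M_{n_i}(G_i)$ coming from the same sequence: as soon as $n_i\geq 2$, projection onto a single coordinate gives an embedding $G_i\hookrightarrow G_i^{n_i-1}\leq\mathrm{Out}(G,\mathcal{G})$, so the finite subgroups of $G_i$ are also bounded by $B$.

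The only place where the choice of $T$ really enters is in arranging $n_i\geq 2$, i.e. producing a reduced $\mathcal{G}$-tree whose non-free vertex $v_i$ has valence at least two. This is a short combinatorial matter for graphs of groups carrying $k$ non-free vertices, first Betti number $p$, and free vertices of valence at least three. When $p\geq 1$ one can push the valence of $v_i$ up to $2$ at the cost of a single unit of Betti number---by placing a loop at $v_i$, or a bigon from $v_i$ to a central free vertex---while if $p=0$ and $k\geq 3$ the star centred at $v_i$ gives $\deg(v_i)=k-1\geq 2$. In each of these configurations the embedding above goes through for every $i$ and the proof is complete.

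The single delicate situation, and the one I expect to be the main obstacle, is $(k,p)=(2,0)$, that is $G=G_1\ast G_2$: here the only reduced $\mathcal{G}$-tree is a single edge, so $n_1=n_2=1$ is forced and $G_i^{n_i-1}$ is trivial. One still has $\mathrm{Out}'(G,\mathcal{G})\cong\mathrm{Aut}(G_1)\times\mathrm{Aut}(G_2)$ of finite index in $\mathrm{Out}(G,\mathcal{G})$, so the finite subgroups of each $\mathrm{Aut}(G_i)$ remain bounded by $B$; but with $n_i=1$ only the image $G_i/Z(G_i)=\mathrm{Inn}(G_i)$ is visible inside $\mathrm{Aut}(G_i)$, so the finite subgroups of $G_i$ meeting the centre $Z(G_i)$ are not directly controlled. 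Resolving this amounts to bounding the torsion of $Z(G_i)$ from the finiteness data of $\mathrm{Aut}(G_i)$, which I would handle either through the auxiliary principle that for a finitely generated group a bound on the finite subgroups of its automorphism group forces one on the group itself, or by disposing of this low-complexity case directly.
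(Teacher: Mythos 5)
Your overall strategy is the same as the paper's: the forward direction is Theorem \ref{main}, and the reverse direction is proved by realising each $\mathrm{Aut}(G_i)$ and each $G_i$ as a subgroup of $\mathrm{Out}(G,\mathcal{G})$, so that the assumed bound $B$ transfers to their finite subgroups. Where the paper handles $(k,p)=(1,1)$ and $(2,0)$ via the stabiliser of the unique simplex and simply cites \cite{Outspaceprod} for the embeddings when $k+p\geq 3$, you construct the embeddings by hand from the decomposition (\ref{(3)}): $\mathrm{Aut}(G_i)$ embeds via the splitting of (\ref{(2)}) for any choice of $\mathcal{G}$-tree, and $G_i$ embeds as a coordinate of $G_i^{n_i-1}$ once a tree with $n_i\geq 2$ is exhibited (a loop at $v_i$ when $p\geq 1$; the star centred at $v_i$ when $p=0$ and $k\geq 3$). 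That part is correct and amounts to a self-contained proof of the fact the paper quotes.

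The proposal is, however, genuinely incomplete at exactly the point you flag: $(k,p)=(2,0)$. There $\mathrm{Out}'(G,\mathcal{G})\cong\mathrm{Aut}(G_1)\times\mathrm{Aut}(G_2)$ does bound the finite subgroups of the $\mathrm{Aut}(G_i)$'s, but, as you observe, the only trace of $G_i$ itself is $\mathrm{Inn}(G_i)\cong G_i/Z(G_i)$ (the $n_i=1$ case of (\ref{(1)})), so finite subgroups of $G_i$ meeting $Z(G_i)$ are not controlled. Neither of your proposed fixes is carried out, and the first one --- that for a finitely generated group $H$ a bound on the finite subgroups of $\mathrm{Aut}(H)$ forces one on $H$ --- is not a quotable or obvious fact: it is precisely the assertion that the torsion in $Z(H)$ is bounded, i.e.\ exactly the missing content. (When $H$ surjects onto $\mathbb{Z}$ one can embed the torsion of $Z(H)$ into $\mathrm{Aut}(H)$ by central transvections $g\mapsto gt^{\chi(g)}$, but no such argument exists in general; for perfect $H$ there are no nontrivial central automorphisms at all.) So, as written, the reverse implication is unproven for $G=G_1\ast G_2$. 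It is worth saying that you have isolated a real issue rather than fumbled a routine step: the paper's own treatment of this case asserts that ``$G_i$ and $\mathrm{Aut}(G_i)$ can be embedded into $\mathrm{Out}^{\Delta(T)}(G,\mathcal{G})$'', yet for $(2,0)$ the natural map $G_i\to\mathrm{Out}^{\Delta(T)}(G,\mathcal{G})$ has kernel $Z(G_i)$, so the published proof glosses over the very same point. Nevertheless, flagging a gap is not closing it, and your proof remains incomplete in this case.
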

\begin{proof}
    We shall prove that there is a uniform upper bound for the orders of the finite subgroups of the $G_i$'s and $\mathrm{Aut}(G_i)$'s, given that the same is true for the finite subgroups of $\mathrm{Out}(G,\mathcal{G})$. 
    \begin{itemize}
        \item If $p=2$ and $k=0$, then $\mathrm{Out}(G,\mathcal{G})=\mathrm{Out}(F_2)$ hence there are no free factors $G_i$'s and we have nothing to prove.
        \item If $(k,p)=(1,1)$ or $(2,0)$, then $\mathcal{O}(\mathcal{G})$ consists of only one simplex $\Delta(T)$. The groups $G_i$ and $\mathrm{Aut}(G_i)$ act on $\mathcal{O}(\mathcal{G})$ and stabilise $\Delta(T)$. Hence, $G_i$ and $\mathrm{Aut}(G_i)$ can be embedded into $\mathrm{Out}^{\Delta(T)}(G,\mathcal{G})$ and hence into $\mathrm{Out}(G,\mathcal{G})$.
        \item If $p+k\ge3$, then the $G_i$'s and $\mathrm{Aut}(G_i)$'s are isomorphic to subgroups of $\mathrm{Out}(G,\mathcal{G})$ (see \cite{Outspaceprod}), and hence there is an upper bound on the orders of the finite subgroups of the $G_i$'s and $\mathrm{Aut}(G_i)$'s.
    \end{itemize}
\end{proof}

The following theorem gives explicit upper bounds for the order of an outer automorphism of a free product.

\begin{theorem}\label{second}
    Let $G$ be a finitely generated group and $G=G_1\ast\ldots\ast G_k\ast F_p$ a free product decomposition of $G$. Let $M_i$ be the maximal order among the elements of finite order in $\mathrm{Out}(G_i)$ and 
 let $N_i$ be the corresponding maximal order among the elements of finite order in $G_i$. Then every torsion element of $\mathrm{Out}(G,\mathcal{G})$ has order at most $M=M(M_i,N_i,k,p)$, where 
 \[
 M=\bigg(\left[e^{\frac{k}{e}}\right]+1\bigg)\cdot  \prod_iM_i\cdot\mathrm{exp}\big((1+\theta_p)\sqrt{p\log{p}}\big)\cdot \big(\max\limits_i\left\{N_i\right\}\big)^{2k-2+2p}
 \]
  and $\lim\limits_{p\to+\infty}\theta_p=0$.
\end{theorem}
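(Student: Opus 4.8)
The plan is to fix a torsion element $[\phi]\in\mathrm{Out}(G,\mathcal{G})$ of order $N$ and to bound $N$ by studying the finite \emph{cyclic} group $H=\langle[\phi]\rangle$ through essentially the normal series used in the proof of Theorem \ref{main}, except that at each step I replace the subgroup bound of Lemma \ref{direct} by the sharper least common multiple bound of Lemma \ref{lcm}, which is now available because $H$ is cyclic. By Theorem \ref{finitesub}, $H$ fixes a point $\Delta(T)$ of $\mathcal{O}(\mathcal{G})$, so $H\le\mathrm{Out}^{\Delta(T)}(G,\mathcal{G})$. I then set $H_1=H\cap\mathrm{Out}'(G,\mathcal{G})$, $H_0=H_1\cap\mathrm{Out}_0^{\Delta(T)}(G,\mathcal{G})$ and $H_{00}=H_0\cap\mathrm{Out}_0(G,\mathcal{G}^{(t)})$. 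Since $H$ is cyclic every subgroup is normal, whence
\[
N=[H:H_1]\cdot[H_1:H_0]\cdot[H_0:H_{00}]\cdot|H_{00}|,
\]
and it suffices to bound each of the four indices by the corresponding factor of $M$.

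Three of the four factors are direct. As in step (1) of Theorem \ref{main}, $H/H_1$ embeds into $\prod_i\mathrm{Sym}_{k_i}\le\mathrm{Sym}_k$; being cyclic it is generated by a single permutation, whose order is at most Landau's function $g(k)$. Using $g(k)\le\max\{\prod_j\ell_j:\sum_j\ell_j=k\}\le e^{k/e}$ together with integrality gives $[H:H_1]\le\left[e^{k/e}\right]+1$. Next, since $H_0\le\mathrm{Out}_0^{\Delta(T)}(G,\mathcal{G})=\prod_iM_{n_i}(G_i)$, the projection to $\prod_i\mathrm{Out}(G_i)$ coming from (\ref{(1)}) has kernel $\mathrm{Out}_0^{\Delta(T)}(G,\mathcal{G}^{(t)})$, so $H_0/H_{00}$ embeds into $\prod_i\mathrm{Out}(G_i)$; by Lemma \ref{lcm} its order is at most $\mathrm{lcm}$ of torsion orders of the $\mathrm{Out}(G_i)$, hence $[H_0:H_{00}]\le\prod_iM_i$. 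Finally $H_{00}\le\mathrm{Out}_0^{\Delta(T)}(G,\mathcal{G}^{(t)})$, so the embedding (\ref{(5)}) injects it into $\prod_i\bigl(G_i^{n_i}/Z(G_i)\bigr)$; applying Lemma \ref{lcm} to this product and Remark \ref{valences} (so that $\sum_in_i\le2k-2+2p$) yields $|H_{00}|\le(\max_iN_i)^{\sum_in_i}\le(\max_iN_i)^{2k-2+2p}$.

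The remaining and most delicate factor is $[H_1:H_0]$. Because $H_1$ fixes every conjugacy class $[G_i]$, it fixes the non-free vertices of the quotient graph $T/G$, and $H_1/H_0$ injects into $\mathrm{Out}^{\Delta(T)}(G,\mathcal{G})/\mathrm{Out}_0^{\Delta(T)}(G,\mathcal{G})$, i.e. into the group of automorphisms of $T/G$ fixing those vertices. The key point is that such a graph automorphism acts on the fundamental group of the underlying graph, which is free of rank $p$, and that for a reduced (minimal) graph this action is faithful modulo inner automorphisms; hence $H_1/H_0$ embeds into $\mathrm{Out}(F_p)$. Being cyclic it is generated by a single torsion element, so $[H_1:H_0]$ is at most the maximal order of a finite-order element of $\mathrm{Out}(F_p)$. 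Invoking the asymptotic $\log(\text{max torsion order})=(1+o(1))\sqrt{p\log p}$ for $\mathrm{Out}(F_p)$ \cite{LEVITT1998630,BAO2000437}, I define $\theta_p$ by this maximal order equal to $\exp\big((1+\theta_p)\sqrt{p\log p}\big)$, so that $\theta_p\to0$ and $[H_1:H_0]\le\exp\big((1+\theta_p)\sqrt{p\log p}\big)$.

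Multiplying the four estimates gives $N\le M$, as claimed. I expect the third paragraph to be the main obstacle: one must verify that collapsing the vertex groups turns a finite-order symmetry of the marked graph $T/G$ into a finite-order outer automorphism of $F_p$ \emph{of the same order} (faithfulness modulo inner automorphisms for reduced graphs), and then import the precise Levitt–Vogtmann torsion asymptotic. A secondary point needing care is the passage from the torsion of $G_i$ to that of the central quotient $G_i^{n_i}/Z(G_i)$ appearing in (\ref{(5)}): one must check that the relevant torsion is controlled by $N_i^{n_i}$, so that Remark \ref{valences} indeed delivers the exponent $2k-2+2p$.
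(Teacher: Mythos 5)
Your overall factorization $N=[H{:}H_1]\cdot[H_1{:}H_0]\cdot[H_0{:}H_{00}]\cdot|H_{00}|$ mirrors the paper's decomposition $M=\kappa\lambda\mu$, and three of your four bounds (Landau's function for the permutation part, $\prod_iM_i$ for the vertex-group part, and the embedding (\ref{(5)}) plus Lemma \ref{lcm} and Remark \ref{valences} for the twist part) are exactly the paper's steps. The genuine gap is the one you yourself flag: the bound $[H_1{:}H_0]\le\exp\big((1+\theta_p)\sqrt{p\log p}\big)$. You assert that for a ``reduced (minimal) graph'' the action of graph automorphisms on $\pi_1$ is faithful modulo inner automorphisms, but you do not prove it, and as stated the claim is false: a rotation of a subdivided circle is a nontrivial automorphism of a minimal graph inducing the trivial outer automorphism of $\pi_1$. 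What you actually need is: a \emph{finite-order} graph automorphism of $T/G$ that fixes \emph{every non-free vertex} and induces the trivial outer automorphism of $F_p$ is the identity. This is true, but requires an argument (for instance: lift to the universal cover of the underlying graph, choose the lift commuting with the deck action, use finite order to force translation length zero along every axis so the lift fixes the core pointwise, and handle the hanging trees by noting their leaves are non-free, hence fixed); both the finite-order hypothesis and the fixing of non-free vertices are essential.

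The paper is arranged precisely so that this lemma never has to be proved. It applies the algebraically defined map $\pi:\mathrm{Out}'(G,\mathcal{G})\to\mathrm{Out}(F_p)\times\prod_i\mathrm{Out}(G_i)$ to $[\phi]^{\kappa}$ \emph{before} descending to the simplex stabilizer: Lemma \ref{lcm} applied to the image bounds its order by $Q(p)\cdot\prod_iM_i$ in one stroke (this single factor $\lambda$ accounts for both of your middle factors), so $[\phi]^{\kappa\lambda}\in\ker\pi=\mathrm{Out}_0(G,\mathcal{G}^{(t)})$, and then only the cited embedding (\ref{(5)}) of $\mathrm{Out}_0^{\Delta(T)}(G,\mathcal{G}^{(t)})$ into $\prod_i\big(G_i^{n_i}/Z(G_i)\big)$ is needed to finish. (To be fair, the paper's assertion that $[\phi]^{\kappa\lambda}$ lies in $\mathrm{Out}_0^{\Delta(T)}(G,\mathcal{G})$, and not merely in $\mathrm{Out}^{\Delta(T)}(G,\mathcal{G})$, quietly rests on a statement of the same flavor, outsourced to Guirardel--Levitt.) Your secondary worry, that torsion in $G_i^{n_i}/Z(G_i)$ is controlled by $N_i^{n_i}$, is treated in the paper exactly as you propose, via Lemma \ref{lcm} and Remark \ref{valences}. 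In short: your skeleton is the paper's, but to complete your version you must either prove the faithfulness lemma above or reroute the $\mathrm{Out}(F_p)$ factor through $\pi$ as the paper does.
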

\begin{proof}
    Let $[\phi]\in \mathrm{Out}(G,\mathcal{G})$ be an outer automorphism of finite order. Since $[\phi]$ permutes the cojugacy classes of the $[G_i]$'s, there is some $\kappa=\kappa(k)\in \mathbb{N}$ such that $[\phi]^{\kappa}([G_i])=[G_i]$, i.e. $[\phi]^{\kappa}\in \mathrm{Out}'(G,\mathcal{G})$. By denoting by $g(k)$ the largest order of an element of the symmetric group $\mathrm{Sym}_k$, which is given by Landau's function (see \cite{Landau} for more information), we have that
    \[
    \kappa=\kappa(k)\leq g(k)\leq e^{\frac{k}{e}}.
    \]
    Let $\lambda=\max\big(\mathrm{lcm}\left\{o([\phi]_1),\ldots,o([\phi]_k),o([\phi]_{k+1})\right\}\big)$ where the maximum is taken over all $([\phi]_i)_{1\leq i\leq k+1}\in T(\mathrm{Out}(F_p))\times \prod\limits_{i=1}^k T(\mathrm{Out}(G_i))$. We denote by $Q(p)$ the maximum order of a torsion element in $\mathrm{Out}(F_p)$.  It is known (see \cite{BAO2000437}) that $Q(p)=\mathrm{exp}\big((1+\theta_p)\sqrt{p\log{p}}\big)$, where $\lim\limits_{p\to+\infty}\theta_p=0$. By Lemma \ref{lcm}, it is clear that $\lambda\leq  Q(p)\times\prod_i M_i $ and  $[\phi]^{\kappa\lambda}\in \ker(\pi)$.\par
    Finally, since $[\phi]^{\kappa\lambda}\in \mathrm{Out}_0^{\Delta(T)}(G,\mathcal{G})\cap \mathrm{ker}(\pi)=\mathrm{Out}_0^{\Delta(T)}(G,\mathcal{G})\cap\mathrm{Out}_0(G,\mathcal{G}^{(t)})$ for some $T\in \mathcal{O}(\mathcal{G})$, the embedding (\ref{(5)}) implies that $[\phi]^{\kappa\lambda}\in \prod\limits_{i=1}^k(G_i^{n_i}/Z(G_i))$. Let $\mu$ be the maximal finite order in $\prod\limits_{i=1}^k(G_i^{n_i}/Z(G_i))$. Then, $[\phi]^{\kappa\lambda\mu}=1$, hence $M=\kappa\lambda\mu$ is the desired upper bound. By applying Lemma \ref{lcm} again we get $\mu\leq \prod_i N_i^{n_i}$. Moreover, by Remark \ref{valences}, the value of the sum of the $n_i$'s is at most $2k-2+2p$, hence $\mu\leq  \big(\max\limits_i\left\{N_i\right\}\big)^{2k-2+2p}$.
\end{proof}

\section{Automorphisms of finite extensions of free groups and free products}
\begin{definition}\label{Pprop}
    A group theoretic property $P$ will be called a \textbf{$\mathcal{P}$-property} if: 
    \begin{enumerate}
    \item $P$ is subgroup closed.
    \item If $H$ is a subgroup of finite index in a group $G$ and $H$ has $P$, then $G$ has $P$.
    \item If $G$ has $P$ and $N$ is a finite normal subgroup of $G$, then $G/N$ has $P$.
\end{enumerate}
\end{definition}

Examples of $\mathcal{P}$-properties are: the property of being virtually torsion free, the property of having finite virtual cohomological dimension, the property of being translation
discrete for finitely generated groups, and the property of having subgroups with uniformly bounded orders (Lemma \ref{boundedlemma} below).

\begin{lemma}\label{boundedlemma}
    The property of having finite subgroups with (uniformly) bounded orders is a $\mathcal{P}$-property.
\end{lemma}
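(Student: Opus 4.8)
The plan is to verify directly the three defining conditions of Definition \ref{Pprop} for the property $P$, which I take to read: ``there exists a uniform bound on the orders of the finite subgroups.'' If a group satisfies $P$, I will write $B(\cdot)$ for such a bound. The first condition (subgroup closedness) is immediate: if $K\le G$ and $G$ satisfies $P$, then every finite subgroup of $K$ is in particular a finite subgroup of $G$, so the bound $B(G)$ works verbatim for $K$.

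The second condition, passing from a finite-index subgroup $H$ to the overgroup $G$, is the crux and the only place where a genuine (if standard) argument is required. Given an arbitrary finite subgroup $F\le G$, I would intersect it with $H$ and invoke the elementary index inequality $[F:F\cap H]\le [G:H]$, which comes from the injection of cosets $f(F\cap H)\mapsto fH$ of $F/(F\cap H)$ into $G/H$. Since $F\cap H$ is a finite subgroup of $H$, we have $|F\cap H|\le B(H)$, and therefore $|F|=[F:F\cap H]\cdot|F\cap H|\le [G:H]\cdot B(H)$. Thus $B(G):=[G:H]\cdot B(H)$ is a uniform bound for $G$, establishing the second condition.

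For the third condition, let $N\trianglelefteq G$ be finite with $G$ satisfying $P$, and let $\overline{F}\le G/N$ be any finite subgroup. I would pull it back along the quotient map $q\colon G\to G/N$: the preimage $F=q^{-1}(\overline{F})$ contains $N$ and is an extension of the finite group $N$ by the finite group $\overline{F}$, hence finite of order $|\overline{F}|\cdot|N|$. As a finite subgroup of $G$ it satisfies $|F|\le B(G)$, so $|\overline{F}|=|F|/|N|\le B(G)$, and $B(G)$ (in fact $B(G)/|N|$) bounds the finite subgroups of $G/N$.

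I expect no serious obstacle here; the only nontrivial ingredient is the coset-injection yielding $[F:F\cap H]\le [G:H]$ in the second step, and the rest is routine bookkeeping of orders, with the three verifications completing the proof that $P$ is a $\mathcal{P}$-property.
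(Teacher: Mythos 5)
Your proof is correct and follows the same overall scheme as the paper: verify the three conditions of Definition \ref{Pprop} one by one, with the subgroup-closure and quotient-by-finite-normal steps handled exactly as the paper does (pulling $\overline{F}$ back to the finite preimage $q^{-1}(\overline{F})$ of order $|\overline{F}|\cdot|N|$). The only divergence is in the finite-index step: the paper first replaces $H$ by a finite-index \emph{normal} subgroup of $G$ (implicitly the normal core, "still denoted by $H$") so that $HK$ is a group and the second isomorphism theorem $K/(K\cap H)\cong HK/H$ applies, whereas you invoke the coset injection $F/(F\cap H)\hookrightarrow G/H$, $f(F\cap H)\mapsto fH$, which is well defined and injective with no normality hypothesis. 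Your route is marginally cleaner: it yields the bound $[G\colon H]\cdot B(H)$ in terms of the original index, while the paper's reduction silently changes the index (the normal core may have strictly larger index than $H$), a point it elides. Both arguments are routine and the quantitative conclusions agree up to this bookkeeping.
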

\begin{proof}
    It is clear that this property is subgroup closed.\par
    Suppose that the order of every finite subgroup of $H$ is bounded by $M$. Let $G$ be a group such that $H\leq G$ and $|G:H|= m$. We may assume that $G$ has a normal subgroup of finite index, which we still denote by $H$ (and $|G:H|=m$). If $K$ is a finite subgroup of $G$, then we have $\left|\dfrac{HK}{H}\right|\leq |G:H|$ and $|K\cap H|\leq M$. Furthermore, by the second isomorphism theorem, we have
    \[
    \dfrac{K}{K\cap H}\cong \dfrac{HK}{H}
    \]
    hence 
    \[
    \dfrac{|K|}{|H\cap K|}=\left|\dfrac{K}{H\cap K}\right|=\left|\dfrac{HK}{H}\right|\leq |G:H|
    \]
    which gives
    \[
     |K|\leq |H\cap K|\cdot |G:H|=m\cdot M,
    \]
    i.e. every finite subgroup of $G$ has uniformly bounded order.\par
    Let $N$ be a finite normal subgroup of a group $G$ and assume that every finite subgroup of $G$ does not have order more  than $M$. Let $\overline{K}$ be a finite subgroup of $G/N$. There is a subgroup $K\leq G$ such that $\overline{K}=\frac{K}{N}\leq \frac{G}{N}$. The groups $N$ and $\overline{K}$ are finite, hence $K$ is also finite with $|K|\leq M$. Therefore, we have
    \[
    |\overline{K}|=\dfrac{|K|}{|N|}<|K|\leq M.
    \]
\end{proof}

The following lemma has been proved useful for the study of $\mathrm{Out}(G)$ in the case where the group $G$ has a finite index subgroup $N$ with trivial center (see for example \cite{SykPap}).

\begin{lemma}[{{\cite[Lemma 3.3.]{SykPap}}}]\label{Plemma}
    Let $G$ be a finitely generated group, $N$ a normal subgroup of $G$ of finite
index with trivial center and $\mathcal{P}$ a group theoretic property as above. If $\mathrm{Out}(N)$ satisfies $\mathcal{P}$, then so does $\mathrm{Out}(G)$. 
\end{lemma}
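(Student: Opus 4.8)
The plan is to transfer $\mathcal{P}$ from $\mathrm{Out}(N)$ to $\mathrm{Out}(G)$ by showing that, after passing to a finite-index subgroup and quotienting by finite normal subgroups, the two groups agree, using throughout only the three closure properties of a $\mathcal{P}$-property (Definition \ref{Pprop}). First I would reduce to automorphisms preserving $N$. Since $G$ is finitely generated it has only finitely many subgroups of index $[G:N]$, and $\mathrm{Aut}(G)$ permutes them; hence the stabiliser $A:=\mathrm{Aut}(G,N)=\{\phi\in\mathrm{Aut}(G):\phi(N)=N\}$ has finite index in $\mathrm{Aut}(G)$. As $\mathrm{Inn}(G)\le A$ (because $N\trianglelefteq G$), the image $\mathrm{Out}(G,N):=A/\mathrm{Inn}(G)$ has finite index in $\mathrm{Out}(G)$, so by property (2) it suffices to prove that $\mathrm{Out}(G,N)$ has $\mathcal{P}$. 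Two finiteness facts, both consequences of the hypotheses, drive the argument: (i) the centraliser $C:=C_G(N)$ is finite, since $C\cap N=Z(N)=\{1\}$ forces $C$ to embed into the finite group $G/N$; and (ii) the restriction homomorphism $\rho:A\to\mathrm{Aut}(N)$, $\phi\mapsto\phi|_N$, has finite kernel, because an automorphism fixing $N$ pointwise must send each $g$ into $gC$, so it is determined by finitely many values in $C$ once a finite generating set of $G$ is fixed.

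Next I would perform the transfer. Composing $\rho$ with $\mathrm{Aut}(N)\to\mathrm{Out}(N)$ gives $\bar\rho:A\to\mathrm{Out}(N)$ whose image $\Gamma$ is a subgroup of $\mathrm{Out}(N)$ and hence has $\mathcal{P}$ by property (1). Here the hypothesis $Z(N)=\{1\}$ is essential: it identifies $\mathrm{Inn}(N)$ with $N$ and makes it the full kernel of $\mathrm{Aut}(N)\to\mathrm{Out}(N)$, so that $\ker\bar\rho=B:=\rho^{-1}(\mathrm{Inn}(N))$ and $\Gamma\cong A/B$. I would then compare the two ``denominators'' $\mathrm{Inn}(G)$ (on the $G$-side) and $B$ (on the $N$-side). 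Using that $\rho(\mathrm{Inn}(G))$ is the image $G^{*}\cong G/C$ of $G$ in $\mathrm{Aut}(N)$ and contains $\mathrm{Inn}(N)\cong N$ with finite index $[G:NC]$, one checks that $F:=\bar\rho(\mathrm{Inn}(G))$ is a finite normal subgroup of $\Gamma$, and that $B=(B\cap\mathrm{Inn}(G))\cdot\ker\rho$, so $B\cap\mathrm{Inn}(G)$ has finite index in $B$. Feeding these computations into the isomorphism theorems yields a short exact sequence $1\to D\to\mathrm{Out}(G,N)\to\Gamma/F\to1$ in which $D\cong\ker\rho/(\ker\rho\cap\mathrm{Inn}(G))$ is finite and, by property (3), $\Gamma/F$ has $\mathcal{P}$.

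The main obstacle is the last step: I am left with $\mathrm{Out}(G,N)$ presented as a finite normal extension of a group that has $\mathcal{P}$, and I must conclude that $\mathrm{Out}(G,N)$ itself has $\mathcal{P}$. The finite kernel $D$ genuinely need not be trivial --- it records the automorphisms of $G$ that fix $N$ pointwise but are not inner, as one already sees for $G=N\times C$ with $C$ a nontrivial finite group --- so this passage cannot be avoided. The delicate point is therefore to absorb $D$ using only the admissible operations: passing to the centraliser $C_{\mathrm{Out}(G,N)}(D)$, which has finite index (its quotient embeds in the finite $\mathrm{Aut}(D)$), reduces by property (2) to that subgroup, and intersecting with $D$ exhibits it, via property (1), as a central extension of a subgroup of $\Gamma/F$ by the finite central group $Z(D)$. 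I expect the technical heart of the proof to be the verification --- within the three defining properties --- that such a finite central extension of a $\mathcal{P}$-group is again a $\mathcal{P}$-group; once this closure is in place, properties (1)--(3) assemble the pieces and, by property (2), yield $\mathcal{P}$ for $\mathrm{Out}(G)$.
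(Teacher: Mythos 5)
You should note first that the paper does not prove this lemma at all: it imports it from \cite{SykPap}, so your proposal can only be judged on its own terms, and on its own terms it has a genuine gap at precisely the step you defer to the end. Everything up to the exact sequence $1\to D\to\mathrm{Out}(G,N)\to\Gamma/F\to1$ with $D$ finite is correct and well argued: the reduction to $\mathrm{Out}(G,N)$ via property (2), the finiteness of $C_G(N)$ and of $\ker\rho$, the identification of $\ker\bar\rho$, and the conclusion via (1) and (3) that $\Gamma/F$ has $\mathcal{P}$. The fatal point is the last step: you need that a group with a finite (central) normal subgroup $D$ whose quotient has $\mathcal{P}$ again has $\mathcal{P}$, and you ``expect'' this to be verifiable within properties (1)--(3) of Definition \ref{Pprop}. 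It is not a consequence of (1)--(3); it is false. For instance, residual finiteness satisfies (1)--(3) (subgroups, finite-index overgroups, and quotients by finite normal subgroups of residually finite groups are residually finite), yet by Deligne's theorem there is a central extension $1\to\mathbb{Z}/2\to E\to Sp(2n,\mathbb{Z})\to1$, $n\ge2$, in which the central $\mathbb{Z}/2$ lies in every finite-index subgroup of $E$: here $E/D$ is residually finite and $E$ is not. The same failure occurs for ``virtually torsion-free'', which is one of the paper's own listed examples of a $\mathcal{P}$-property: a non-split central extension by $\mathbb{Z}/2$ of a torsion-free, finitely presented, simple group with $H^2(\,\cdot\,;\mathbb{Z}/2)\neq0$ (suitable Burger--Mozes lattices qualify) cannot be virtually torsion-free, since a torsion-free finite-index subgroup would force the extension to split. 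So no manipulation using only (1)--(3) can absorb $D$; your centralizer trick correctly reduces to the central case but cannot eliminate it.

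To repair the argument you must avoid ever placing a finite kernel \emph{below} the $\mathcal{P}$-group. One way is to verify the missing closure directly for the specific property in play; for the property this paper actually needs (uniformly bounded orders of finite subgroups, Lemma \ref{boundedlemma}) this is immediate, since a finite subgroup $H\le E$ satisfies $|H|=|H\cap D|\cdot|HD/D|\le|D|\cdot M$ --- but then one has proved a weaker lemma, not the stated one. The structural alternative is to make the comparison map injective: since $Z(N)=1$ forces $C_{\mathrm{Aut}(N)}(\mathrm{Inn}(N))=1$, in the case $C_G(N)=1$ conjugation embeds $G$ as a subgroup $G^{*}\le\mathrm{Aut}(N)$ containing $\mathrm{Inn}(N)$ with finite index, restriction gives an isomorphism $\mathrm{Aut}(G,N)\cong N_{\mathrm{Aut}(N)}(G^{*})$ carrying $\mathrm{Inn}(G)$ onto $G^{*}$, and hence $\mathrm{Out}(G,N)\cong\big(N_{\mathrm{Aut}(N)}(G^{*})/\mathrm{Inn}(N)\big)\big/\big(G^{*}/\mathrm{Inn}(N)\big)$ is literally a quotient of a subgroup of $\mathrm{Out}(N)$ by a finite normal subgroup, after which (1), (3) and (2) finish with no leftover $D$. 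However, your own example $G=N\times C$ (or $G=N\rtimes\mathbb{Z}/4$ with $\mathbb{Z}/4$ acting through an involution of $N$) shows that when $C_G(N)\neq1$ the kernel $D$ is genuinely nontrivial, so in that generality the lemma cannot be obtained from (1)--(3) alone and some additional closure hypothesis or property-specific input is unavoidable. As written, the proposal therefore does not prove the statement.
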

It is well known that finite subgroups of $\mathrm{Out}(F_n)$ have uniformly bounded orders. More precisely:
\begin{theorem}[\cite{WZ}]\label{Out(Fn)}
    The
maximal order of a finite subgroup of $\mathrm{Out}(F_n)$ is $2^n\cdot n!$.
\end{theorem}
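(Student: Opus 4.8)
The statement has two halves: exhibiting a finite subgroup of order $2^n\cdot n!$ (the lower bound) and showing that no finite subgroup can be larger (the upper bound). I will treat $n\ge 3$; the cases $n\le 2$ are settled by direct inspection of $\mathrm{Out}(F_1)\cong\mathbb{Z}_2$ and $\mathrm{Out}(F_2)\cong \mathrm{GL}_2(\mathbb{Z})$ (whose largest finite subgroup has order $12$), so the clean formula $2^n\cdot n!$ is genuinely an $n\ge 3$ phenomenon and any proof must use this.

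For the lower bound, write $F_n=\langle x_1,\dots,x_n\rangle$ and let $W_n=\mathbb{Z}_2^n\rtimes \mathrm{Sym}_n$ be the group of signed permutations. It acts on $F_n$ by $x_i\mapsto x_{\sigma(i)}^{\pm 1}$; geometrically these are exactly the simplicial automorphisms of the rose $R_n$ that permute and invert the petals, so $|W_n|=2^n\cdot n!$ and we obtain a homomorphism $W_n\to \mathrm{Aut}(F_n)$. To see that it remains injective after passing to $\mathrm{Out}(F_n)$, I would compose with the abelianisation map $\mathrm{Aut}(F_n)\to \mathrm{GL}_n(\mathbb{Z})$, which factors through $\mathrm{Out}(F_n)$ since inner automorphisms act trivially on $H_1(F_n)\cong\mathbb{Z}^n$. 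The image of $W_n$ is the full group of signed permutation matrices, again of order $2^n\cdot n!$; hence $W_n$ embeds in $\mathrm{Out}(F_n)$ as a finite subgroup of the required order.

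For the upper bound, let $H\le \mathrm{Out}(F_n)$ be finite. Applying Theorem \ref{finitesub} with $k=0$ and $p=n$ (so that $\mathcal{G}=\emptyset$ and $\mathrm{Out}(G,\mathcal{G})=\mathrm{Out}(F_n)$), the group $H$ fixes a point of Outer space, hence stabilises a simplex $\Delta(T)$. Since there are no non-free vertices, the group $\mathrm{Out}_0^{\Delta(T)}(F_n)$ appearing in (\ref{(3)}) is an empty product, hence trivial; therefore $\mathrm{Out}^{\Delta(T)}(F_n)$ embeds into the group of simplicial automorphisms of the quotient graph $\Gamma=T/F_n$, and in particular $H\hookrightarrow \mathrm{Aut}(\Gamma)$, where $\Gamma$ is a finite connected graph of rank $n$. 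After collapsing $H$-invariant forests and unsubdividing $H$-orbits of valence-$2$ vertices equivariantly, I would reduce to the case where every vertex has valence $\ge 3$, the only obstruction being valence-$2$ vertices whose two incident edges are interchanged by some element of $H$, which must be retained as orbifold mirror points.

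It then remains to bound $|\mathrm{Aut}(\Gamma)|$ for a reduced rank-$n$ graph. The key identity is the equivariant (orbifold) Euler characteristic relation
\[
1-n=\chi(\Gamma)=|H|\cdot\left(\sum_{\text{vertex orbits}}\frac{1}{|H_v|}-\sum_{\text{edge orbits}}\frac{1}{|H_e|}\right),
\]
where $H_v$ is a vertex stabiliser, which injects into the symmetric group on the half-edges incident to $v$, and $H_e$ is the setwise stabiliser of a geometric edge, of order at most twice its pointwise stabiliser. Maximising $|H|$ for fixed $n$ is equivalent to minimising the positive quantity $-\chi^{\mathrm{orb}}(\Gamma/H)$, and the heart of the proof — and the step I expect to be hardest — is the extremal combinatorial analysis showing that, for $n\ge 3$, this minimum equals $(n-1)/(2^n\,n!)$ and is achieved by the rose with its full hyperoctahedral action. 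The delicacy is real: the dipole graphs (two vertices joined by $n+1$ edges) realise $2\,(n+1)!$, which ties with $2^n\,n!$ at $n=3$ and \emph{exceeds} it at $n=2$, so the argument must genuinely invoke $n\ge 3$ and rule out all competing graphs. Combining the two halves yields that the maximal order is exactly $2^n\cdot n!$.
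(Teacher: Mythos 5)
First, a point of comparison: the paper does not prove this statement at all --- it is quoted as a known result with a citation to \cite{WZ}, so there is no internal argument to measure your attempt against, and it must stand as a self-contained proof. Two things in it are genuinely right and worth keeping. Your caveat about small $n$ is correct: as printed, the formula fails for $n=2$, since $\mathrm{Out}(F_2)\cong\mathrm{GL}_2(\mathbb{Z})$ contains a dihedral subgroup of order $12>2^2\cdot 2!$, so the theorem is really an $n\ge 3$ statement. And your lower bound is complete: realizing $W_n=\mathbb{Z}_2^n\rtimes\mathrm{Sym}_n$ on the rose and certifying injectivity in $\mathrm{Out}(F_n)$ by mapping onto the signed permutation matrices in $\mathrm{GL}_n(\mathbb{Z})$ is a correct and clean argument. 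The reduction of the upper bound via Theorem \ref{finitesub} with $k=0$, so that a finite subgroup $H$ injects into $\mathrm{Aut}(\Gamma)$ for a finite connected graph $\Gamma$ of rank $n$ with all valences at least $3$, is also sound and is indeed the Wang--Zimmermann strategy.

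The genuine gap is the step you yourself flag as ``the heart of the proof'': the extremal claim that for $n\ge 3$ every finite group acting faithfully on such a graph has order at most $2^n\cdot n!$. You state what must be shown but do not show it, and this extremal analysis \emph{is} the content of the cited theorem --- everything before it (realization plus Euler-characteristic bookkeeping) is the routine part, so the proposal in effect reduces the theorem to itself. Worse, the two tools you offer toward that analysis do not suffice. The parenthetical claim that a vertex stabilizer $H_v\le\mathrm{Aut}(\Gamma)$ injects into the symmetric group on the half-edges at $v$ is false in general: an automorphism can fix the star of $v$ pointwise while acting nontrivially elsewhere (e.g.\ permuting loops attached to a different vertex). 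And the minimization of $-\chi^{\mathrm{orb}}$ cannot be carried out ``softly'' over all graphs of finite groups, because without imposing faithfulness of the covering action there is no positive lower bound at all: a single vertex with group of order $N$ carrying a loop with edge group of order $N/2$ has $-\chi^{\mathrm{orb}}=1/N$, which is arbitrarily small. So faithfulness must enter the combinatorics in an essential way, via a genuine case analysis that rules out all competitors of the rose (with the dipole tying exactly at $n=3$, as you note). Until that analysis is supplied, the upper bound --- and hence the theorem --- is not proved.
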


Combining Lemma \ref{Plemma} with Theorem \ref{Out(Fn)} we obtain the following.
\begin{corollary}\label{fbf}
    Let $G$ be a finitely generated free-by-finite group $G$. Then every finite subgroup of $\mathrm{Out}(G)$ has uniformly bounded order. In particular, if $G$ is not virtually cyclic, then the order of a finite subgroup of $\mathrm{Out}(G)$ is at most $2^{\mathrm{rank}(F)}\cdot \mathrm{rank}(F)!\cdot |G:F|$, where $F$ is a finite index, normal, free subgroup of $G$.
\end{corollary}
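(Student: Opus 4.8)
The plan is to reduce to the non-virtually-cyclic case and then feed the free normal subgroup into the $\mathcal{P}$-property machinery of Lemmas \ref{boundedlemma} and \ref{Plemma}. Fix a finite index, normal, free subgroup $F\trianglelefteq G$ and write $m=|G:F|$ and $n=\mathrm{rank}(F)$. If $G$ is virtually cyclic then $F$ has rank $0$ or $1$, so $G$ is finite or two-ended; in both cases $\mathrm{Out}(G)$ is finite and the first assertion is immediate. I therefore assume from now on that $G$ is not virtually cyclic. Since a finite index subgroup of a free group is free and $G$ is virtually $F_n$, the group $G$ fails to be virtually cyclic precisely when $n\ge 2$; in that range $Z(F)=\{1\}$, which is exactly the hypothesis required to apply Lemma \ref{Plemma}.

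First I would settle the qualitative statement. By Lemma \ref{boundedlemma} the property of having finite subgroups of uniformly bounded order is a $\mathcal{P}$-property, and by Theorem \ref{Out(Fn)} the group $\mathrm{Out}(F)=\mathrm{Out}(F_n)$ satisfies it, with explicit bound $2^n\cdot n!$. As $F$ is normal of finite index with trivial centre, Lemma \ref{Plemma} transfers this property from $\mathrm{Out}(F)$ to $\mathrm{Out}(G)$, proving that the finite subgroups of $\mathrm{Out}(G)$ have uniformly bounded order.

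To obtain the explicit bound I would make the preceding transfer quantitative by tracking how the constant $2^n\cdot n!$ propagates through the two operations underlying Lemmas \ref{boundedlemma} and \ref{Plemma}. The key structural inputs are that the centraliser $C_G(F)$ is finite, since $C_G(F)\cap F=Z(F)=\{1\}$ forces $C_G(F)$ to inject into $G/F$, and that conjugation therefore realises $G/C_G(F)$ as a subgroup of $\mathrm{Aut}(F)$ in which $\mathrm{Inn}(F)$ corresponds to $F$ and the finite quotient $G/(F\cdot C_G(F))$ embeds into $\mathrm{Out}(F)$. The proof of Lemma \ref{boundedlemma} shows that enlarging a group by a finite index subgroup of index $d$ multiplies the bound on finite subgroups by $d$, whereas dividing by a finite normal subgroup does not increase it. Relating $\mathrm{Out}(G)$ to $\mathrm{Out}(F)$ through a restriction map on the $F$-preserving automorphisms and one such index enlargement by the factor $|G:F|=m$, and combining with the bound $2^n\cdot n!$ for the finite subgroups of $\mathrm{Out}(F)$, yields the asserted estimate $2^{\mathrm{rank}(F)}\cdot \mathrm{rank}(F)!\cdot |G:F|$.

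The main obstacle is precisely this last bookkeeping step. The delicate point is that $F$ need not be characteristic in $G$, so a priori $\mathrm{Out}(G)$ may move $F$ within its orbit of normal free subgroups of index $m$, and the restriction map $\mathrm{Aut}(G)\to\mathrm{Aut}(F)$ carries an ambiguity measured by the image of $G$ in $\mathrm{Out}(F)$ together with the finite group $C_G(F)$. One must verify that the orbit contribution, this restriction ambiguity and the kernel $C_G(F)$ together contribute a combined multiplicative factor of at most $|G:F|$, rather than a larger quantity; in the favourable situation where $F$ is characteristic and $C_G(F)$ is trivial the accounting even improves to $2^n\cdot n!/m$, so the stated bound is a safe over-estimate that absorbs the general case. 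Once this accounting is carried out, the corollary follows.
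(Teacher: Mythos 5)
Your first two paragraphs are correct and are precisely the paper's own argument: split off the case $\mathrm{rank}(F)\leq 1$, observe that $Z(F)=\{1\}$ once $\mathrm{rank}(F)\geq 2$, and combine Lemma \ref{boundedlemma}, Lemma \ref{Plemma} and Theorem \ref{Out(Fn)} to transfer the uniform bound from $\mathrm{Out}(F)$ to $\mathrm{Out}(G)$. The gap is in the quantitative part, and it is exactly the one you flagged but then waved away: you never carry out the \emph{bookkeeping step}, and your closing assertion that the orbit contribution, the restriction ambiguity and $C_G(F)$ jointly cost a factor of at most $|G:F|$ (so that the stated bound is a \emph{safe over-estimate}) is false. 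Take $G=F_n\times\mathbb{Z}/2$ with $n\geq 3$ and $F=F_n\times\{1\}$, so $|G:F|=2$. Since $\{1\}\times\mathbb{Z}/2$ is the set of torsion elements of $G$, it is characteristic, and every automorphism of $G$ has the form $(f,z)\mapsto(\alpha(f),\lambda(f)z)$ with $\alpha\in\mathrm{Aut}(F_n)$, $\lambda\in\mathrm{Hom}(F_n,\mathbb{Z}/2)$; it follows that $\mathrm{Out}(G)\cong(\mathbb{Z}/2)^n\rtimes\mathrm{Out}(F_n)$. Letting $W\leq\mathrm{Out}(F_n)$ be the group of symmetries of the rose, of order $2^n\cdot n!$, the subgroup $(\mathbb{Z}/2)^n\rtimes W$ of $\mathrm{Out}(G)$ is finite of order $4^n\cdot n!$, which exceeds the claimed bound $2^n\cdot n!\cdot|G:F|=2^{n+1}\cdot n!$ by a factor of $2^{n-1}$. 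The failure is the one you identified: $F$ is not characteristic, its $\mathrm{Aut}(G)$-orbit consists of the $2^n$ complements of the centre, so the stabiliser of $F$ in $\mathrm{Out}(G)$ (isomorphic to $\mathrm{Out}(F_n)$ here) has index $2^n$, and it is this index, not $|G:F|$, that the finite-index step of Lemma \ref{boundedlemma} multiplies into the bound.

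In fairness, this is not only your gap but the paper's: its proof dismisses the explicit constant with the single phrase that it follows \emph{from the proof of Lemma \ref{boundedlemma}}, and the constant $2^{\mathrm{rank}(F)}\cdot\mathrm{rank}(F)!\cdot|G:F|$ is wrong for exactly the reason above, since the transfer in Lemma \ref{Plemma} passes through the finite-index subgroup of outer automorphisms preserving $F$, whose index is governed by the size of the $\mathrm{Aut}(G)$-orbit of $F$ and by $|C_G(F)|$, neither of which is controlled by $|G:F|$. Your side remark about the favourable case ($F$ characteristic and $C_G(F)=\{1\}$, where one does get the bound $2^n\cdot n!/m$) is correct, and shows you see which inputs matter; the point is that the general case does not reduce to it at the cost of a factor $m$. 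So the qualitative assertion of the corollary is proved, both by you and by the paper, but the explicit bound as stated cannot be proved because it is false; a correct bound must involve the orbit and centraliser data (or be stated for a characteristic free subgroup of finite index, at the cost of changing $\mathrm{rank}(F)$ and $|G:F|$).
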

\begin{proof}
    $G$ contains a finitely generated free (normal) subgroup $F$ of finite index $|G:F|$. If $F$ has rank $1$, then $\mathrm{Out}(G)$ is a finite group. If $F$ has rank more than $1$, then 
its center $Z(F)$ is trivial and hence Lemmas \ref{boundedlemma}, \ref{Plemma} with Theorem \ref{Out(Fn)} imply that $\mathrm{Out}(G)$ has finite subgroups of uniformly bounded orders. In fact, from the proof of Lemma \ref{boundedlemma}, we see that every finite subgroup of $\mathrm{Out}(G)$ has order no more than $2^{\mathrm{rank}(F)}\cdot \mathrm{rank}(F)!\cdot |G:F|$.
\end{proof}

We also extend Theorem \ref{main} to finite extensions of free products.
\begin{corollary}\label{vfp}
    Let $G=G_1\ast\ldots G_k\ast F_p$ be a free product such that the $G_i$'s and $\mathrm{Out}(G_i)$'s have uniformly bounded orders of finite subgroups. If $\widetilde{G}$ contains $G$ as a normal subgroup of finite index, then $\mathrm{Out}(\widetilde{G})$ has uniformly bounded orders of finite subgroups. 
\end{corollary}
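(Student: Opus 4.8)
The plan is to mimic the proof of Corollary~\ref{fbf}: first bound the finite subgroups of $\mathrm{Out}(G)$ using Theorem~\ref{main}, and then transport this bound up to $\mathrm{Out}(\widetilde{G})$ through the $\mathcal{P}$-property machinery of Lemmas~\ref{boundedlemma} and~\ref{Plemma}. We take $G=G_1\ast\cdots\ast G_k\ast F_p$ to be the Grushko decomposition of $G$, which is the natural setting for the stated hypotheses. For the Grushko decomposition every automorphism of $G$ permutes the conjugacy classes of the free factors, so $\mathrm{Out}(G)=\mathrm{Out}(G,\mathcal{G})$, and Theorem~\ref{main} becomes available as soon as we supply a bound on the finite subgroups of each $\mathrm{Aut}(G_i)$.

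The first real task is to convert the given data (bounds for the finite subgroups of $G_i$ and of $\mathrm{Out}(G_i)$) into a bound for the finite subgroups of $\mathrm{Aut}(G_i)$, which is what Theorem~\ref{main} actually consumes. The bridge is the short exact sequence $1\to \mathrm{Inn}(G_i)\to \mathrm{Aut}(G_i)\to \mathrm{Out}(G_i)\to 1$ with $\mathrm{Inn}(G_i)\cong G_i/Z(G_i)$: for a finite subgroup $F\le \mathrm{Aut}(G_i)$ the first isomorphism theorem gives $F/(F\cap \mathrm{Inn}(G_i))\hookrightarrow \mathrm{Out}(G_i)$, whence $|F|\le |F\cap \mathrm{Inn}(G_i)|\cdot M_i$, where $M_i$ bounds the finite subgroups of $\mathrm{Out}(G_i)$. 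Thus it suffices to bound the finite subgroups of $G_i/Z(G_i)$. When $Z(G_i)$ is finite this is immediate, since ``having uniformly bounded finite subgroups'' is a $\mathcal{P}$-property (Lemma~\ref{boundedlemma}) and hence passes to the quotient of $G_i$ by the finite normal subgroup $Z(G_i)$. With each $\mathrm{Aut}(G_i)$ so controlled, Theorem~\ref{main} delivers a uniform bound on the finite subgroups of $\mathrm{Out}(G,\mathcal{G})=\mathrm{Out}(G)$.

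The second task is purely formal. Since we are in the Grushko setting with $k+p\ge 2$, the group $G$ is a nontrivial free product and therefore $Z(G)=1$ (the degenerate cases $k+p\le 1$ reduce to finite or virtually cyclic $\widetilde{G}$, or to a single factor, and are handled directly). As $G$ is normal of finite index in $\widetilde{G}$ and has trivial centre, Lemma~\ref{Plemma}, applied to the $\mathcal{P}$-property of having uniformly bounded finite subgroups, transfers this property from $\mathrm{Out}(G)$ to $\mathrm{Out}(\widetilde{G})$; this is exactly the conclusion of the corollary.

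I expect the only genuinely delicate point to be the implication ``$G_i$ and $\mathrm{Out}(G_i)$ bounded $\Rightarrow \mathrm{Aut}(G_i)$ bounded,'' that is, the control of the finite subgroups of $\mathrm{Inn}(G_i)=G_i/Z(G_i)$. The issue is that an arbitrary quotient of a group with uniformly bounded finite subgroups need not inherit this property, so the infinite-centre case is not covered by the $\mathcal{P}$-property axioms and would require either an extra argument or the mild hypothesis that $Z(G_i)$ be finite (which holds in the cases of interest). Everything else is the bookkeeping already present in the proof of Theorem~\ref{main} together with the formal closure properties packaged in Lemmas~\ref{boundedlemma} and~\ref{Plemma}.
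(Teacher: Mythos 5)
Your proposal is correct and takes essentially the same route as the paper, whose entire proof of this corollary is the single line ``Since $Z(G)=1$, appealing to Lemmas \ref{boundedlemma} and \ref{Plemma} complete the proof,'' with Theorem \ref{main} invoked only implicitly for the required bound on $\mathrm{Out}(G)$. The two points you single out as delicate --- that Theorem \ref{main} consumes bounds on the $\mathrm{Aut}(G_i)$'s whereas the corollary hypothesizes bounds on the $\mathrm{Out}(G_i)$'s (forcing one to control $\mathrm{Inn}(G_i)\cong G_i/Z(G_i)$, which is genuinely problematic when $Z(G_i)$ is infinite), and that the decomposition must be Grushko (or at least $\mathrm{Out}(G)$-invariant) so that $\mathrm{Out}(G)=\mathrm{Out}(G,\mathcal{G})$ --- are both silently elided in the paper's one-line argument, so your extra care exposes real gaps in the published proof rather than deviating from its approach.
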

\begin{proof}
    Since $Z(G)=1$, appealing to Lemmas \ref{boundedlemma} and \ref{Plemma} complete the proof.
\end{proof}
\printbibliography

\begin{multicols}{2}

\noindent
Ioannis Papavasileiou\\
National and Kapodistrian University of Athens\\
13giannispapav@hotmail.gr\\
https://orcid.org/0009-0009-5421-3013

\noindent
Dionysios Syrigos\\
Winterhur way, rg21 7uq, Basingstoke, uk\\
Dionisissyrigos@gmail.com\\
https://orcid.org/0000-0002-7876-2641
\end{multicols}
\end{document}